\newtheorem{theorem}{Theorem}[section]
\newtheorem{lemma}[theorem]{Lemma}
\newtheorem{proposition}[theorem]{Proposition}
\theoremstyle{definition}
\newtheorem{remark}[theorem]{Remark}
\newtheorem{definition}[theorem]{Definition}
\newtheorem*{ack}{Acknowledgments}
\newcommand{\R}{\mathbb{R}}
\newcommand{\N}{\mathbb{N}}
\newcommand{\bari}{\operatorname{bar}}
\numberwithin{equation}{section}
\subjclass[2020]{Primary 52A38, 49K40, 52A05; Secondary 28A75, 49Q10.}
\keywords{Fractional perimeter, quantitative isoperimetric inequality, barycentric asymmetry.}
\begin{document}
\title[]{Barycentric stability of nonlocal perimeters: the convex case} %%%%%%%%%%%%
\date{\today}
\author[Gambicchia]{Chiara Gambicchia}
\address[C. Gambicchia]{%Dipartimento di Matematica
	%\newline\indent
	Scuola Normale Superiore
	\newline\indent
	piazza dei Cavalieri 7,
	56127 Pisa, Italy}
\email{chiara.gambicchia@sns.it}
\author[Merlino]{Enzo Maria Merlino}
\address[E. M. Merlino]{Dipartimento di Matematica
	\newline\indent
	Alma Mater Studiorum  Universit\`a di Bologna
	\newline\indent
	piazza di Porta San Donato 5,
	40126 Bologna, Italy}
\email{enzomaria.merlino2@unibo.it}
\author[Ruffini]{Berardo Ruffini}
\address[B. Ruffini]{Dipartimento di Matematica
	\newline\indent
	Alma Mater Studiorum  Universit\`a di Bologna
	\newline\indent
	piazza di Porta San Donato 5,
	40126 Bologna, Italy}
\email{berardo.ruffini@unibo.it}
\author[Talluri]{Matteo Talluri}
\address[M.\ Talluri]{Dipartimento di Matematica
	\newline\indent
	Alma Mater Studiorum  Universit\`a di Bologna
	\newline\indent
	piazza di Porta San Donato 5,
	40126 Bologna, Italy}
\email{matteo.talluri@unibo.it}

\begin{abstract}

In this work, we establish a sharp form of a nonlocal quantitative isoperimetric inequality involving the barycentric asymmetry for convex sets. This result can be seen as the nonlocal analogue of the one obtained by Fuglede in \cite{Fuglede1993}. A main tool in the proof is an estimate from below of the fractional perimeter by a negative power of the inradius for convex sets.

\end{abstract}

\maketitle

\section{Introduction}

Quantitative isoperimetric inequalities have recently attracted considerable interest. The fundamental question of this research line is simple to state. It is well known that, among all sets with a given volume, the ball uniquely minimizes the perimeter, one may ask whether a set that nearly minimizes the perimeter must itself be close, in some precise sense, to a ball. Therefore, the goal is to establish a quantitative relation linking the perimeter excess of a set to its geometric proximity to a ball.

To formalize this, we  recall the notion of the \emph{isoperimetric deficit} of a set $E \subseteq \mathbb{R}^n$, defined as
$$
\delta(E) := \frac{P(E) - P(B(m))}{P(B(m))}\,,
$$
where $P(\cdot)$ is the perimeter in the sense of De Giorgi and $B(m)$ denotes the ball centered at the origin with volume $m = |E|$. {We point out that it is, by definition, scale invariant.}
Next, we need an index that measures how far a set is from being a ball, which leads to the definition of \emph{asymmetry}. In literature, different notions of asymmetry have been introduced{, all of which are scale invariant by definition}.

{One possible} choice is the \emph{Hausdorff asymmetry}, defined by
$$
\lambda_H(E) := \inf \left\{ \frac{d\big(E,(x+B(m))\big)}{|E|^\frac1n} :\, x\in\R^n \right\},
$$
where $d$ denotes the Hausdorff distance; see, e.g.,~\cite[Subsection~3.2]{F}.  
Indeed, after some first contributions in the planar case~\cite{Ber, Bon}, in higher dimensions Fuglede~\cite{Fuglede1989} proved that for any convex set $E$, up to explicit multiplicative constants depending on the dimension, one can estimate the Hausdorff asymmetry by a suitable power of the deficit $\delta(E)$, with the correct order of magnitude as $\delta\to 0$.  

However, it is not difficult to recognize that the Hausdorff asymmetry is a too strong a notion when dealing with general sets of finite perimeter; see, for instance,~\cite[Section~4]{F} for an explicit counterexample.  
As first recognized by Hall in~\cite{Hall}, the Hausdorff asymmetry can be replaced by the \emph{Fraenkel asymmetry index}:
\begin{equation}\label{def_Fr_as}
\lambda(E) := \min \left\{ \frac{\big|E\triangle (x+B(m))\big|}{|E|} :\, x\in\R^n \right\},
\end{equation}
where we denote by "$\triangle$" the symmetric difference, that is, for any pair of sets $A$ and $B$ in $\R^n$, $A\triangle B := (A\setminus B) \cup (B\setminus A)$.  
With this notion of asymmetry, as first proved in~\cite{FMP}, the sharp quantitative isoperimetric inequality states that, for any set $E$ of finite perimeter,
\begin{equation}\label{ShQII}
\lambda(E) \leq C_F(n) \sqrt{\delta(E)},
\end{equation}
where $C_F(n)$ is a constant depending only on the dimension $n$.  
Inequality~\eqref{ShQII} has then been reproved with different techniques; see, for instance,~\cite{FiMP, CL}.

Another notion of asymmetry, which is the one we focus on in this article, is the so-called \emph{barycentric asymmetry}, defined by
\begin{equation}\label{def_bar_as}
\lambda_0(E) := \frac{\big|E\triangle (\bari(E)+B(m))\big|}{|E|},
\end{equation}
where $\bari(E)$ denotes the barycenter of $E$. So, while with the Fraenkel asymmetry the optimal ball is chosen to minimize the volume of the symmetric difference with the set, in the case of the barycentric asymmetry the ball is simply the one centered at the barycenter of the set.  
This is a strong and somewhat arbitrary choice; however, it is reasonable to expect that in most cases if a set $E$ is very close to a ball, then the center of this ball cannot be too far from the barycenter of $E$. Working with the barycentric asymmetry thus becomes very convenient, since it avoids the need for an optimization procedure.  
For instance, in numerical approximations, it is clearly computationally much more efficient to compute the barycenter and then evaluate the volume of the symmetric difference, rather than performing a minimization process.

The corresponding sharp quantitative inequality, proved by Fuglede in~\cite{Fuglede1993}, reads as
\begin{equation}\label{ShBar}
\lambda_0(E) \leq C_B(N) \sqrt{\delta(E)},
\end{equation}
whenever $E$ is a convex set of finite perimeter.  
Observe that, as in~\eqref{ShQII} and unlike the case of Hausdorff asymmetry, the sharp exponent in the deficit is again $\frac12$.

As with the Hausdorff asymmetry, also with the barycentric one the inequality~\eqref{ShBar} is not valid for general sets and the same counterexample can be used to prove it, see \cite[Section 1]{GP}. However, recently, the estimate~\eqref{ShBar} has been extended under weaker geometric assumptions.  
In~\cite{BCH}, the authors showed that there exists a universal constant $C_{BCH}$ such that, for every connected set $E\subset \mathbb{R}^2$, one has
$$
\lambda_0(E) \leq C_{BCH} \sqrt{\delta(E)}.
$$
Moreover, in~\cite{GP} it was proved that, for every $n\geq 2$ and every $D>0$, there exists a constant $C(n,D)$ such that, for any set $E\subseteq \mathbb{R}^n$ with $\operatorname{diam}(E) \leq D |E|^{1/n}$, it holds
$$
\lambda_0(E) \leq C(n,D) \sqrt{\delta(E)}.
$$

Motivated by these results, in this paper we focus on convex sets and aim to extend the barycentric quantitative isoperimetric inequality \eqref{ShBar} to the fractional setting.   The notion of nonlocal perimeter was introduced in~\cite{CRS}.  

%The simple yet profound idea underlying this new definition is to consider pointwise interactions between a set and its {complement}, modulated by a kernel.  
%The prototypical example involves singular kernels with polynomial decay.  
 Given $s\in(0,1)$, the $s$--perimeter of a measurable set $E\subseteq\mathbb{R}^n$ is defined as
\begin{equation}\label{defPs}
P_s(E):=\int_E\int_{E^c}\frac{dx\,dy}{|x-y|^{n+s}}.
\end{equation}
The study of fractional perimeters is motivated by several applications.  
They naturally arise as nonlocal generalizations of the classical perimeter, interpolating between the Lebesgue measure and the De Giorgi's perimeter functional, see, e.g.,~\cite{PonSpe}.  
Moreover, fractional perimeters appear in models with long-range interactions, such as phase transitions, dislocation dynamics, and nonlocal diffusion processes, see~\cite{BV}.  

The isoperimetric property of balls for the nonlocal perimeter was established in~\cite{FS}: for any measurable set $E\subset\mathbb{R}^n$ with $|E| = {|B(m)|}$, one has
\begin{equation}\label{non--local_iso}
P_s({B(m)}) \leq P_s(E),
\end{equation}
with equality if and only if $E$ is a ball. The sharp quantitative version of the isoperimetric inequality~\eqref{non--local_iso} was later proved in~\cite{FFMMM}.  
For every $n\geq 2$ and $s_0\in (0,1)$, there exists a positive constant $C(n, s_0)$ such that 
\begin{equation}\label{isop-F2M3}
\lambda(E) \leq C(n,s_0)\sqrt{\delta_s(E)},
\end{equation}
whenever $s\in[s_0,1]$ and $0<|E|<\infty$, where
\begin{equation}\label{s-def}
\delta_s(E) := \frac{P_s(E) - P_s({B(m)})}{P_s({B(m)})}, \qquad \text{with } {|B(m)|} = |E|,
\end{equation}
is the $s$--isoperimetric deficit.

The aim of this note is to initiate the study of barycentric quantitative isoperimetric inequalities in the fractional setting, focusing on convex sets. More precisely, we establish a lower bound for the $s$--isoperimetric deficit in terms of the barycentric asymmetry, valid for any convex set in $\mathbb{R}^n$. 

Our main result is the following.
\begin{theorem}\label{thm:main}
For any $s\in(0,1)$ and any convex set $E\subseteq\mathbb{R}^n$ with finite measure and nonempty interior, there exists a constant $C$, 
	depending only on $n$ and $s$, such that
	\begin{equation}\label{our-quant-iso}
		\lambda_0(E)\leq C\,\sqrt{\delta_s(E)}.
	\end{equation}
\end{theorem}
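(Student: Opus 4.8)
The plan is to deduce \eqref{our-quant-iso} from the nonlocal Fraenkel inequality \eqref{isop-F2M3} of \cite{FFMMM} by exploiting convexity to bound the displacement of the barycenter of $E$ from the center of an optimal Fraenkel ball (for $n=1$ the statement is trivial, so assume $n\ge2$). Denote by $B_r(x)$ the ball of radius $r$ centered at $x$, set $B_r:=B_r(0)$ and $\omega_n:=|B_1|$. Since all the quantities in \eqref{our-quant-iso} are scale invariant we may assume $|E|=\omega_n$, and after a translation that $\bari(E)=0$; note that $0\in E$, because $E$ is convex, and that $E$ is bounded, because it has finite measure and nonempty interior. As $\lambda_0(E)\le2$ always, it suffices to prove
\[
\lambda_0(E)\le C(n)\,\lambda(E)\qquad\text{whenever }\lambda(E)\le\varepsilon_0,
\]
for a suitable dimensional threshold $\varepsilon_0$: indeed, if $\lambda(E)>\varepsilon_0$, then \eqref{isop-F2M3} gives $\sqrt{\delta_s(E)}\ge\varepsilon_0/C(n,s)$ and \eqref{our-quant-iso} is immediate. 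Henceforth $\lambda(E)\le\varepsilon_0$, and we fix a minimizer $x^\ast$ in \eqref{def_Fr_as}, so that $|E\triangle B_1(x^\ast)|=\lambda(E)\,\omega_n$.

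The geometric core of the argument — and the step I expect to be the most delicate — is a diameter bound: for $\varepsilon_0=\varepsilon_0(n)$ small enough, $\lambda(E)\le\varepsilon_0$ forces $\operatorname{diam}(E)\le R_0(n)$. This is classical for convex bodies. The mechanism is that $|E\cap B_1(x^\ast)|=\omega_n\bigl(1-\tfrac12\lambda(E)\bigr)\ge\tfrac12\omega_n$: this convex subset of a unit ball has each hyperplane slice of $(n-1)$-measure at most $\omega_{n-1}$, hence minimal width bounded below, hence (by Steinhagen's inequality) inradius bounded below, so $E$ contains a fixed-size ball $B_{r_0}(z)$ with $r_0=r_0(n)>0$ and $|z-x^\ast|\le1$. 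Were $\operatorname{diam}(E)$ large, picking $p\in E$ with $|p-x^\ast|\ge\operatorname{diam}(E)/2$ would make $\operatorname{conv}\bigl(B_{r_0}(z)\cup\{p\}\bigr)\subseteq E$ contain a cone with apex $p$ over an $(n-1)$-disk of radius $r_0$, whose portion outside $B_1(x^\ast)$ has volume $\gtrsim_n\operatorname{diam}(E)-C(n)$; since this portion lies in $E\setminus B_1(x^\ast)$, of measure $\le\lambda(E)\,\omega_n$, we obtain $\operatorname{diam}(E)\le C(n)\bigl(1+\lambda(E)\bigr)\le R_0(n)$. In particular $E\subseteq B_{R_0}$ (as $0\in E$), and since an optimal Fraenkel ball must intersect $E$ (otherwise $\lambda(E)=2$) we get $|x^\ast|\le1+R_0$; thus $|x|\le C(n)$ for every $x\in E\cup B_1(x^\ast)$.

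The rest is soft. From $\int_E x\,dx=\omega_n\,\bari(E)=0$ and $\int_{B_1(x^\ast)}x\,dx=\omega_n\,x^\ast$,
\[
\omega_n\,|x^\ast|=\left|\int_{B_1(x^\ast)\setminus E}x\,dx-\int_{E\setminus B_1(x^\ast)}x\,dx\right|\le\Bigl(\sup_{E\cup B_1(x^\ast)}|x|\Bigr)\,|E\triangle B_1(x^\ast)|\le C(n)\,\lambda(E)\,\omega_n,
\]
by the previous paragraph, so $|x^\ast|\le C(n)\,\lambda(E)$. Then, by the triangle inequality for the symmetric difference and the elementary bound $|B_1(x^\ast)\triangle B_1|\le C(n)\,|x^\ast|$ (valid because $|x^\ast|\le C(n)\varepsilon_0$ is small),
\[
\lambda_0(E)=\frac{|E\triangle B_1|}{\omega_n}\le\frac{|E\triangle B_1(x^\ast)|+|B_1(x^\ast)\triangle B_1|}{\omega_n}\le\lambda(E)+C(n)\,|x^\ast|\le C(n)\,\lambda(E),
\]
and \eqref{isop-F2M3} concludes the proof: $\lambda_0(E)\le C(n)\,\lambda(E)\le C(n,s)\,\sqrt{\delta_s(E)}$.

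Two remarks. Convexity is used decisively only in the diameter bound, in accordance with the failure of \eqref{ShBar} — hence of \eqref{our-quant-iso} — for general sets. An alternative, more self-contained route, patterned on Fuglede's proof of \eqref{ShBar} in \cite{Fuglede1993}, would use the diameter bound to reduce to nearly spherical sets (convexity also yielding $\|u\|_{W^{1,\infty}(\mathbb{S}^{n-1})}\le C\sqrt{\|u\|_{L^\infty(\mathbb{S}^{n-1})}}$ for the radial parametrization $\partial E=\{(1+u(\theta))\theta\}$), Taylor-expand $P_s$ at the ball via the second-variation formula of \cite{FFMMM} to obtain $\delta_s(E)\ge c(n,s)\,[u]_{H^{s/2}(\mathbb{S}^{n-1})}^2$ once $u$ is $L^2$-orthogonal to $1,x_1,\dots,x_n$ (the volume and barycenter constraints), and then conclude via $\lambda_0(E)\le C\|u\|_{L^1}\le C\|u\|_{L^2}\le C\,[u]_{H^{s/2}(\mathbb{S}^{n-1})}$ and a fractional Poincaré inequality on $\mathbb{S}^{n-1}$; there the delicate point is controlling the Taylor remainder of $P_s$ by the $H^{s/2}$-seminorm. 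I would follow the first, shorter route.
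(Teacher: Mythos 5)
Your argument is correct, and it takes a genuinely different route from the paper's. The paper follows Fuglede's scheme closely: after the diameter bound (Proposition~\ref{prop:deficit_piccolo_implica_cubo}), a compactness argument (Lemma~\ref{lm:decifit picclo baricentro piccolo}) shows that small $s$--deficit forces small $\lambda_0$, then Fuglede's estimate $d(E)=O(\lambda(E)^{2/(n+1)})$ reduces to nearly spherical sets, and finally the Fuglede-type lower bound of Theorem~\ref{fuglede} on $P_s(E)-P_s(B)$ supplies the sharp $\sqrt{\delta_s}$ behavior. You instead derive the barycentric inequality directly from the Fraenkel inequality~\eqref{isop-F2M3} of \cite{FFMMM}, by proving the purely geometric comparison $\lambda_0(E)\le C(n)\,\lambda(E)$ for convex $E$ with small Fraenkel asymmetry; the two ingredients are a diameter bound (via the inradius of $E\cap B_1(x^\ast)$ and a cone-volume estimate) and the one-line Fubini bound $\omega_n|x^\ast|=\bigl|\int_{B_1(x^\ast)\setminus E}x\,dx-\int_{E\setminus B_1(x^\ast)}x\,dx\bigr|\le C(n)\,\lambda(E)\,\omega_n$ for the displacement of the optimal center, after which $\lambda_0\le\lambda+C(n)|x^\ast|\le C(n)\lambda$. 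Your route is shorter, avoids the compactness argument (so the threshold $\varepsilon_0$ and the final constant are, in principle, explicit), and delegates all the analytic work to \eqref{isop-F2M3}; the paper's route only needs the nearly-spherical case of \cite{FFMMM} rather than the full quantitative inequality, and its Proposition~\ref{prop:deficit_piccolo_implica_cubo}/Lemma~\ref{lm:decifit picclo baricentro piccolo} are interesting in their own right. Both proofs localize the use of convexity to the boundedness step, as they must, given the failure of \eqref{ShBar} for general sets. The alternative route you sketch at the end is essentially the paper's argument.
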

Although in this note we focus on convex sets, the proof works as well for nearly spherical sets, see Remark \ref{rmk:ns}. This is a key point for extending the result to a broader class of sets. However, as in the local case, we point out that a barycentric isoperimetric inequality cannot hold without additional assumptions on the class of admissible sets (e.g., equi--boundedness). Indeed, consider the set $E$ as the union of a ball of radius slightly less than one and a second ball of very small radius $\varepsilon$ placed sufficiently far apart so that the barycenter lies outside the set $E$ and total volume equals that of the unit ball. In this configuration, we have $\lambda_0(E) = 2$, while $\delta_s(E) \approx \varepsilon^{n-s}$.

{We finally point out the recent paper \cite{CPP2025} where the authors show some barycentric quantitative  inequalities in capillarity problems, with techniques which may be adapted to the nonlocal case}

The paper is organized as follows. In Section 2  we will recall standard facts about fractional Sobolev spaces. In Section 3 we will prove a continuity result for the  {$s$--}isoperimetric deficit in terms of the barycentric asymmetry. Finally, in Section 4 we will prove our main result, that is, the lower bound of the $s$--isoperimetric deficit in terms of the barycentric asymmetry.

\section{Setting and main result}
In this section, we recall some basic facts about the fractional perimeter and the corresponding isoperimetric properties of balls.

From \eqref{defPs}, it is easy to see that the $s$--perimeter is translation and rotation invariant, and for any $\lambda>0$, there holds $P_s(\lambda E)=\lambda^{n-s}P_s(E)$.  
Moreover, note that for sets with finite $s$--perimeter it holds
$$P_s(E)=\iint_{\R^{2n}}\frac{\chi_E(x)\chi_{E^c}(y)}{|x-y|^{n+s}}\,dxdy=\frac12\iint_{\mathbb R^{2n}}\frac{\left|\chi_E(x)-\chi_E(y)\right|}{\left|x-y\right|^{n+s}}dx\,dy=\frac 1 2 [\chi_E]_{W^{s,1}(\R^n)},$$
where $[\chi_E]_{W^{s,1}(\R^n)}$ denotes the Gagliardo $W^{s,1}$--seminorm of the characteristic function of $E$ and $W^{s,1}(\mathbb R^n)$ is the fractional Sobolev space defined by
\begin{equation*}
W^{s,1}(\R^n):=\left\{u\in L^1(\R^n):\,\iint_{\R^{2n}}\frac{|u(x)-u(y)|}{|x-y|^{n+s}}\,dx\,dy<\infty\right\}.
\end{equation*}

Since for $s\in (0,1)$ the space $BV(\R^n)$ is embedded in $W^{s,1}(\R^n)$, see \cite[Proposition 2.1]{Lom} the $s$--perimeter of $E$ is finite if $E$ has
finite {De Giorgi} perimeter and finite measure. On the other hand, $P_s(E)$ can be finite even if the Hausdorff dimension of $\partial E$ is strictly greater than $n-1$, see, for instance, \cite[Theorem 1.1]{Lom}. In particular, since convex sets are of locally finite perimeter (and bounded convex sets are of finite perimeter), convex sets are of locally finite $s$--perimeter (and bounded convex sets are of finite $s$-perimeter).

Furthermore, the $s$--perimeter can be seen as a fractional interpolation between the De Giorgi's perimeter (recovered in the limit $s\to 1$) and the $n$--dimensional Lebesgue measure (corresponding to $s \to 0$). More precisely, it can be shown
\begin{equation}\label{convergenza}
\lim_{s\nearrow1}\,(1-s)P_s(E)= \omega_{n-1}\,P(E),
\end{equation} where $P(\cdot)$ is the De Giorgi perimeter and $\omega_{n-1}=\mathcal{H}^{n-1}(\partial B)$. The asymptotic result \eqref{convergenza} was first obtained by combining the seminal work by Bourgain, Brezis and Mironescu \cite[Theorem 3 and Remark 4]{BBM} with a result by D\'avila \cite{davila2002open}. On the other hand,  as a consequence of {a result by Maz'ya and Shaposhnikova, that is,} \cite[Theorem 3]{MR1940355}, we have, for any set $E$ of finite measure and finite $s$--perimeter
\begin{equation}\label{convergenza0}
\lim_{s\searrow 0}sP_s(E)=n\omega_n|E|,
\end{equation} where $\omega_n=|B|$.

In order to deal with the barycentric quantitative version of the isoperimetric inequality \eqref{non--local_iso}, we recall the notion of asymmetry that we will use in the following.
\begin{definition}\label{defbar}
    Given a set $E\subseteq\R^n$ with positive measure, we define the \emph{barycenter} of $E$ as \[
    \operatorname{bar}(E)=\fint_Ex\:dx,
    \]and the \emph{barycentric asymmetry} of $E$ as \[
    \lambda_0(E)=\frac{|E \triangle(\operatorname{bar}{(E)}+B(m))|}{|E|},
    \]
   {where, as above, $B(m)$ denotes the ball centered at the origin and with the same volume as $E$.}
\end{definition}
\noindent From the above definition, for every $E\subseteq\mathbb{R}^n$, we clearly have
$$\lambda(E)\leq\lambda_0(E)\leq2.$$
As in the local case treated in \cite{CL}, the starting point to prove \eqref{isop-F2M3} is a Fuglede-type result, for \emph{nearly spherical} sets, see \cite[Theorem 2.1]{FFMMM}.

\begin{definition}
    \label{def:nearlyspherical}An open and bounded set $E\subseteq\mathbb R^n$ with $|E|=|B|$ and barycenter at the origin is \emph{nearly spherical} if \[
    \partial E=\left\{\left(1+u(x)\right)x\:|\:x\in\partial B\right\}
    \]for some $u\in W^{1,\infty}(\partial B)$ with $\|u\|_{W^{1,\infty}(\partial B)}$ sufficiently small.
\end{definition}

\begin{theorem}\label{fuglede}
 There exist two constants $\varepsilon_0\in(0,1/2)$ and $c_0>0$, depending only on $n$, such that, if $E$ is a nearly spherical set with $\|u\|_{W^{1,\infty}(\Omega)}<\varepsilon_0$, then
\begin{equation}\label{fug0}
 P_s(E)-P_s(B)\geq c_0\,\Big([u]_{\frac{1+s}{2}}^2+s\,P_s(B)\,\|u\|_{L^2(\partial B)}^2\Big)\,,\qquad\text{ for all } s\in(0,1)\,,
 \end{equation}
where the Gagliardo seminorm $[u]_{\frac{1+s}{2}}$ is {defined as}
\[
[u]_{\frac{1+s}{2}}^2 = \iint_{\partial B\times\partial B}\frac{|u(x)-u(y)|^2}{|x-y|^{n+s}}\,d\mathcal{H}^{n-1}_x\,d\mathcal{H}^{n-1}_y.
\]
\end{theorem}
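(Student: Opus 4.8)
The plan is to adapt Fuglede's original argument to the nonlocal functional. Since a nearly spherical set is star-shaped about the origin with radial function $1+u$, passing to polar coordinates $x=\rho\theta$, $y=\sigma\omega$ gives
\[
 P_s(E)=\iint_{\partial B\times\partial B}\Phi\big(1+u(\theta),1+u(\omega);\theta\cdot\omega\big)\,d\mathcal{H}^{n-1}_\theta\,d\mathcal{H}^{n-1}_\omega,\qquad
 \Phi(a,b;t):=\int_0^a\!\!\int_b^\infty\frac{(\rho\sigma)^{n-1}\,d\sigma\,d\rho}{(\rho^2-2\rho\sigma t+\sigma^2)^{(n+s)/2}},
\]
so that $P_s(B)=\iint\Phi(1,1;\theta\cdot\omega)$. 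The idea is to Taylor expand $\Phi(1+a,1+b;t)$ in $(a,b)$ around $(0,0)$ and integrate. Two elementary remarks make this effective. First, the mixed second derivative is explicit: $\Phi_{ab}(1,1;t)=-(2-2t)^{-(n+s)/2}$, which equals $-|\theta-\omega|^{-(n+s)}$ when $\theta\cdot\omega=t$; this is exactly the kernel of $[\,\cdot\,]_{\frac{1+s}{2}}^2$. Second, the only singularity of $\Phi$ as $\theta\to\omega$ lies in the difference variable $b-a$ and is governed by the self-similar profile obtained by rescaling $(\rho,\sigma)$.

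Performing the expansion and using the $\theta\leftrightarrow\omega$ symmetry, the first-order term equals $c_1(n,s)\int_{\partial B}u\,d\mathcal{H}^{n-1}$ for some constant $c_1$ (by rotational invariance); the volume constraint $|E|=|B|$ forces $\int_{\partial B}u=-\tfrac{n-1}{2}\int_{\partial B}u^2+O\big(\|u\|_{L^\infty}\|u\|_{L^2(\partial B)}^2\big)$, so this term is in fact quadratic. Collecting all quadratic contributions, inserting the explicit $\Phi_{ab}$ and using $2ab=a^2+b^2-(a-b)^2$, one is led to
\[
 P_s(E)-P_s(B)=Q_s(u)+\mathcal{R}(u),\qquad Q_s(u)=\tfrac12\,[u]_{\frac{1+s}{2}}^2+\kappa(n,s)\,\|u\|_{L^2(\partial B)}^2,
\]
where $\kappa(n,s)$ is an explicit combination of one–dimensional integrals. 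The step I expect to be the main obstacle is showing that $\mathcal{R}(u)$ is truly lower order, i.e. $|\mathcal{R}(u)|\le C(n)\,\|u\|_{W^{1,\infty}}\big([u]_{\frac{1+s}{2}}^2+sP_s(B)\,\|u\|_{L^2(\partial B)}^2\big)$: a naive third–order Taylor remainder diverges near the diagonal. The way around it is that $|u(\theta)-u(\omega)|\le C(n)\|u\|_{W^{1,\infty}}|\theta-\omega|$, so for $\varepsilon_0$ small the pair $(u(\theta),u(\omega))$ always stays in the self–similarity regime $|b-a|\lesssim|\theta-\omega|$, where the Taylor remainder is bounded by $C(n)(1-t)^{-(n+s+1)/2}|u(\theta)-u(\omega)|^3$; extracting one factor $|u(\theta)-u(\omega)|\lesssim\|u\|_{W^{1,\infty}}|\theta-\omega|$ kills the excess half power of the singularity and leaves a multiple of $[u]_{\frac{1+s}{2}}^2$. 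The pairs away from the diagonal are dealt with by a routine smooth expansion, together with the bound $sP_s(B)\ge c(n)>0$ on $(0,1)$, which follows from \eqref{convergenza} and \eqref{convergenza0}.

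It then remains to establish the coercivity $Q_s(u)\ge c_0(n)\big([u]_{\frac{1+s}{2}}^2+sP_s(B)\|u\|_{L^2(\partial B)}^2\big)$ on the admissible perturbations. Decomposing $u=\sum_{k\ge0}u_k$ into spherical harmonics, the $O(n)$–invariance of both terms gives $Q_s(u)=\tfrac12\sum_k\big(\mu_k(s)+2\kappa(n,s)\big)\|u_k\|_{L^2}^2$, with $0=\mu_0(s)<\mu_1(s)<\mu_2(s)<\cdots$ the eigenvalues of $[\,\cdot\,]_{\frac{1+s}{2}}^2$ on $\partial B$. Since rigid translations of $B$ do not change $P_s$ and $B$ is a critical point, evaluating the expansion along a family $r\mapsto B+re$ forces $Q_s$ to vanish on the degree–one harmonics, i.e. $\mu_1(s)+2\kappa(n,s)=0$; hence $Q_s(u)=\tfrac12\sum_k(\mu_k(s)-\mu_1(s))\|u_k\|_{L^2}^2$. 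The barycenter constraint $\operatorname{bar}(E)=0$ yields $\|u_1\|_{L^2}\le C(n)\|u\|_{L^2}^2$ and the volume constraint $\|u_0\|_{L^2}\le C(n)\|u\|_{L^2}^2$, so both low modes are negligible for $\varepsilon_0$ small; on the modes $k\ge2$ one uses $\mu_k-\mu_1\ge(1-\mu_1/\mu_2)\mu_k$ and $\mu_2-\mu_1\ge c(n)\,sP_s(B)$. That $\mu_1(s)/\mu_2(s)$ stays bounded away from $1$ and that $\mu_2(s)-\mu_1(s)$ is comparable to $sP_s(B)$, \emph{uniformly} in $s\in(0,1)$, follows from the continuity of $s\mapsto\mu_k(s)$ together with the asymptotics of $P_s(B)$ as $s\to0^+$ and $s\to1^-$; these uniformities are what keeps $c_0$ and $\varepsilon_0$ depending only on $n$. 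Choosing first $c_0=c_0(n)$ from these bounds and then $\varepsilon_0=\varepsilon_0(n)$ small enough to absorb $\mathcal{R}(u)$ and the contribution of $u_0,u_1$ yields \eqref{fug0}.
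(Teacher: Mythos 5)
This theorem is not proved in the paper at all: the authors explicitly cite it as \cite[Theorem 2.1]{FFMMM} and use it as a black box. Your proposal is therefore best read as a reconstruction of the FFMMM argument rather than as something to compare against the paper's own proof, which does not exist.

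As a reconstruction, the skeleton is right and matches what Figalli--Fusco--Maggi--Millot--Morini actually do: write $P_s(E)$ in polar coordinates as a double integral over $\partial B\times\partial B$ of a function of $(1+u(\theta),1+u(\omega),\theta\cdot\omega)$, Taylor expand to second order to extract the Gagliardo seminorm from the mixed derivative $\Phi_{ab}(1,1;t)=-(2-2t)^{-(n+s)/2}$, absorb the linear term through the volume constraint, diagonalize the resulting quadratic form on spherical harmonics, kill the degree-$0$ and degree-$1$ modes via the volume and barycenter constraints, and establish coercivity on $k\ge2$ with constants uniform in $s$ by comparing the eigenvalue gap with $sP_s(B)$. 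The identity $\mu_1(s)+2\kappa(n,s)=0$ you deduce from translation invariance is correct, provided one is careful that the $\kappa$ appearing there is the coefficient in the \emph{constrained} quadratic form, i.e.\ after the linear term $c_1\int_{\partial B}u$ has been absorbed using $\int u=-\tfrac{n-1}{2}\int u^2+\dots$; the unconstrained $\kappa$ satisfies $\mu_1+2\kappa=c_1(n-1)\ne0$, and failing to distinguish the two would break the cancellation on degree-one harmonics.

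Two places are still genuinely soft. First, the remainder estimate. You invoke a pointwise bound $|\mathcal R_3|\lesssim(1-t)^{-(n+s+1)/2}|u(\theta)-u(\omega)|^3$, but the third-order partials of $\Phi$ at $(1,1;t)$ are only of order $(1-t)^{-(n+s)/2}$, not $(1-t)^{-(n+s+1)/2}$; the control you need comes from expanding $\Phi$ in the scaled variable $(a-b)/\sqrt{1-t}$ and exploiting cancellations between the terms $\Phi_{aaa}a^3$, $\Phi_{aab}a^2b$, \ldots when $a,b$ are close, rather than from a crude sup bound on a single derivative. This is exactly where FFMMM do the real work, and as written your estimate would not close. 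Second, the uniform-in-$s$ lower bound on the spectral gap, $\mu_2(s)-\mu_1(s)\ge c(n)\,sP_s(B)$ and $\mu_1(s)/\mu_2(s)\le1-c(n)$, cannot be obtained from ``continuity plus limiting asymptotics'' alone on the open interval $(0,1)$: one needs to actually verify that the two limits $s\to0^+$ and $s\to1^-$ are nondegenerate (the former because $[\,\cdot\,]_{(1+s)/2}$ tends to the $H^{1/2}(\partial B)$ seminorm, the latter after renormalizing by $(1-s)$, cf.\ \eqref{convergenza} and \eqref{convergenza0}) and then invoke compactness of $[0,1]$. If you fill in those two points, the proposal reproduces the cited theorem.
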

\section{Preliminary results}

The aim of this section is essentially the reduction to the {\it small--deficit regime}.
In other words, we prove that for convex sets, if the $s$--isoperimetric deficit is sufficiently small, then the barycentric asymmetry must be small as well.
The proof of this fact is divided into two steps.
First, we prove that the statement holds for uniformly bounded sets, and then we prove that convex sets with  small deficit are uniformly bounded. In what follows we will denote by $Q_l$ the cube of side $l.$
\begin{lemma}\label{lm:decifit picclo baricentro piccolo}
   {Let $l>0$ and $s\in(0,1)$} {be given}. Then, for every $\varepsilon>0$ there exists $\eta=\eta(n,s,l,\varepsilon)$ such that, for any set $E\subset Q_l$ with volume $|E|=\omega_n$ and barycenter at the origin, if $\delta_s(E)\leq \eta$ {holds}, then $\lambda_0(E)\leq \varepsilon$.
\end{lemma}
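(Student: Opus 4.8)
The plan is to argue by contradiction via a compactness argument, exploiting that all competitors live in a fixed bounded cube. Suppose the statement fails for some $\varepsilon>0$: then there exist a sequence of sets $E_k\subset Q_l$ with $|E_k|=\omega_n$ and $\operatorname{bar}(E_k)=0$ such that $\delta_s(E_k)\to 0$ but $\lambda_0(E_k)\geq\varepsilon$ for every $k$. Since $|E_k|=\omega_n$, the reference ball in the definition of $\delta_s$ is the unit ball $B$, so $\delta_s(E_k)\to 0$ means $P_s(E_k)\to P_s(B)$; in particular the Gagliardo seminorms $[\chi_{E_k}]_{W^{s,1}(\R^n)}=2P_s(E_k)$ are uniformly bounded. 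Because $\chi_{E_k}$ is supported in $Q_l$ and $\|\chi_{E_k}\|_{L^1}=\omega_n$, the sequence $(\chi_{E_k})$ is bounded in $W^{s,1}(Q_l)$.

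By the compact embedding $W^{s,1}(Q_l)\hookrightarrow\hookrightarrow L^1(Q_l)$ on the bounded domain $Q_l$, up to a subsequence $\chi_{E_k}\to g$ in $L^1(Q_l)$ and, along a further subsequence, a.e.; since the $\chi_{E_k}$ take values in $\{0,1\}$, so does $g$, hence $g=\chi_E$ for some measurable $E\subset Q_l$, and (extending by zero outside $Q_l$) $\chi_{E_k}\to\chi_E$ in $L^1(\R^n)$. Passing to the limit gives $|E|=\omega_n$, and the uniform boundedness of the supports makes $x\mapsto x$ an admissible test function, so $\operatorname{bar}(E)=\lim_k\operatorname{bar}(E_k)=0$. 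Moreover, by Fatou's lemma applied to the double integral in \eqref{defPs} along the a.e.\ convergent subsequence, $P_s$ is lower semicontinuous under $L^1$ convergence, whence
\[
P_s(E)\leq\liminf_{k\to\infty}P_s(E_k)=P_s(B).
\]
On the other hand, the nonlocal isoperimetric inequality \eqref{non--local_iso} gives $P_s(E)\geq P_s(B)$, so equality holds; by the rigidity part of \eqref{non--local_iso}, $E$ is a ball, and since $|E|=\omega_n$ and $\operatorname{bar}(E)=0$ it must be $E=B$ up to a null set.

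Finally, $\lambda_0$ is continuous along this sequence: since $\operatorname{bar}(E_k)\to 0$ and $\chi_{E_k}\to\chi_B$ in $L^1$, one has $|E_k\triangle(\operatorname{bar}(E_k)+B)|\to|B\triangle B|=0$, that is $\lambda_0(E_k)\to 0$, contradicting $\lambda_0(E_k)\geq\varepsilon$. This proves the lemma. The only genuinely nontrivial ingredients are the compact embedding $W^{s,1}(Q_l)\hookrightarrow\hookrightarrow L^1(Q_l)$ and the $L^1$-lower semicontinuity of $P_s$ together with the rigidity in \eqref{non--local_iso}; all three are classical, so the main care needed is bookkeeping — checking that the limit is a set of the correct volume with barycenter at the origin and that no mass escapes, which is guaranteed by the uniform containment in $Q_l$.
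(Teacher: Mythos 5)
Your proposal is correct and follows essentially the same compactness-by-contradiction argument as the paper: uniform $W^{s,1}$ bound from the vanishing deficit, compact embedding into $L^1(Q_l)$, passage to the limit for volume and barycenter, $L^1$-lower semicontinuity of $P_s$, and rigidity of the fractional isoperimetric inequality to force the limit to be the unit ball, contradicting $\lambda_0(E_k)\geq\varepsilon$. (One tiny slip: you write ``$\operatorname{bar}(E_k)\to 0$'' in the final step, but by hypothesis $\operatorname{bar}(E_k)=0$ for every $k$, so $\lambda_0(E_k)=|E_k\triangle B|/\omega_n$ directly, which makes the contradiction immediate.)
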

\begin{proof}
    Fix a positive $\varepsilon$ and assume by contradiction that such an $\eta$ does not exist. 
    Then there exists a sequence of sets $\{E_j\}$ such that:
    \begin{itemize}
        \item For every $j\in\N$, $E_j\subset Q_l$;
        \item For every $j\in\N$, $|E_j|=\omega_n$ and bar$(E_j)=0$;
        \item $\delta_s(E_j)\to 0$ as $j\to +\infty$;
        \item For every $j\in\N$, it holds $\lambda_0(E_j)>\varepsilon>0$.

    \end{itemize}
    Hence,  the $\chi_{E_j}$'s are uniformly bounded in $W^{1,s}(Q_l)$ with 
    \[ \sup_{j\in\mathbb N}\int_{Q_l}\int_{Q_l}\frac{|\chi_{E_j}(x)-\chi_{E_j}(y)|}{|x-y|^{n+s}}\,dx\,dy<+\infty\]
    and so, due to \cite[Theorem 7.1]{DPV}, and recalling that every $E_j$ is contained in $Q_l$, we can assume, up to a subsequence, that \[
    \chi_{E_j}\longrightarrow \chi_{E}
    \]
   strongly in $L^1$, as $j\to \infty$, for some set $E$ of finite $s$--perimeter. In particular, the limit $E$ will have volume \[
   |E|=\|\chi_E\|_{L^1}=\lim_{j\to+\infty}\|\chi_{E_j}\|_{L^1}=\omega_n,
   \]and, by the Dominated Convergence Theorem, the limit set $E$ will have barycenter equal to $0$.
  Moreover, its fractional perimeter will be 
   \[
   P_s(E)\leq \liminf_{j\to+\infty}P_s(E_j)=P_s(B),
   \]
   since $\delta_s(E_j)\to 0$ and the fractional perimeter is lower semicontinuous with respect to the $L^1$--convergence. % by \cite[Proposition 3.1]{CRS}. 
   Hence, by \eqref{non--local_iso}, we conclude that $E=B$ and so $|E_j\triangle B|=\|\chi_E - \chi_{E_j}\|_{L^1}\to 0$, which contradicts the assumption that $\lambda_0(E_j)=|E_j\triangle B|/\omega_n>\varepsilon\,$ for any $j\in\N$.
\end{proof}

{Now, we recall some classical notions and results concerning support and
separation of convex sets. For further details, we refer to
\cite[Section~1.3]{Sch-book}.

Let $ E\subset \mathbb{R}^n$ be any set and let $H \subset \mathbb{R}^n$ be a hyperplane. Let us denote by $H^+$ and $H^-$ the two closed halfspaces bounded by $H$. We say that $H$ \emph{supports} $E$ at a point $x$ if $x \in E \cap H$ and either $E \subset H^+$ or $E \subset H^-$. Moreover, $H$ is called a \emph{support hyperplane} of $E$ if it supports $E$ at some point $x$, which is necessarily a boundary point of $E$.

Let $ H_{u,p}$ be the hyperplane passing thought $p$ and orthogonal to the direction $u\in \mathbb{S}^{n-1}$. If $ H_{u,p}$ supports $E$ at $p\in \partial E$ and
\[
E \subset H^+_{u,p}
:= \{ y \in \mathbb{R}^n : \langle y-p, u \rangle \geq 0 \},%\quad \text{with }u\in\mathbb{S}^{n-1},
\]
 then $H^+_{u,p}$ is called a \emph{supporting halfspace} of $E$ and $u$ is called an \emph{outer unit normal vector} of both $H_{u,p}$ and $H^+_{u,p}$.

For any closed and convex set $E \subset \mathbb{R}^n$ there exists a support hyperplane of $E$ through each boundary point of $E$ (see, e.g., \cite[Theorem~1.3.2]{Sch-book}).

\begin{remark}\label{rmk:bound-conv}
Let $E\subset\mathbb{R}^n$ be a convex set with nonempty interior and
$0<|E|<\infty$. Then $E$ must be bounded. Indeed, let
$p\in\operatorname{int}(E)$, then there exists $\varepsilon>0$ such that
$B(p,\varepsilon)\subset E$. Assume by contradiction that $E$ is unbounded.
Then there exists a sequence $(x_k)\subset E$ such that
$\|x_k-p\|\to\infty$. For each $k\in\mathbb{N}$, by convexity,
\[
\operatorname{conv}\big(B(p,\varepsilon)\cup\{x_k\}\big)\subset E,
\]
where $\operatorname{conv}\big(B(p,\varepsilon)\cup\{x_k\}\big)$ denotes the
convex hull of $B(p,\varepsilon)\cup\{x_k\}$, i.e.\ the intersection of all
convex subsets of $\mathbb{R}^n$ containing $B(p,\varepsilon)\cup\{x_k\}$.
Therefore,
\[
|E|\geq
\big|\operatorname{conv}\big(B(p,\varepsilon)\cup\{x_k\}\big)\big|
\geq \frac{1}{n}\,\omega_{n-1}\,\varepsilon^{\,n-1}\,\|x_k-p\|
\longrightarrow \infty,
\]
which contradicts the assumption $|E|<\infty$.
\end{remark}}

In the next result, we show that convex sets with small deficit satisfy the hypotheses of Lemma \ref{lm:decifit picclo baricentro piccolo}. {To the best of our knowledge, related results may already exist in the literature, possibly in different frameworks. However, since we could not locate a specific reference addressing this setting, we present a self-contained argument. As a byproduct, our approach highlights a relationship between the fractional perimeter, the volume, and the inradius for convex sets (see equation \eqref{ps-vol-inr} below), which might be of independent interest.}

\begin{proposition}\label{prop:deficit_piccolo_implica_cubo}
    There exists a constant $\widetilde{l}=\widetilde{l}(n,s)$ such that, for any convex set $E$ with nonempty interior, $|E|=\omega_n$ and $\delta_s(E)\leq 1$, there holds 
    \[
    {\rm diam}(E)\leq \widetilde{l}.
    \]
\end{proposition}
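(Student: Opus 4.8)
The plan is to prove that a large diameter forces a large nonlocal perimeter, contradicting $\delta_s(E)\le 1$, i.e.\ $P_s(E)\le 2P_s(B)$ (here $P_s(B)<\infty$ since $B$ is a bounded convex set). The key estimate is a pointwise lower bound for $P_s$ on convex bodies in terms of a weighted volume integral: I would show that
\begin{equation*}
P_s(E)\ \geq\ \kappa(n,s)\int_E \operatorname{dist}(x,\partial E)^{-s}\,dx,\qquad \kappa(n,s):=\int_{\{\,\zeta\in\R^n:\ \zeta_1\geq 1\,\}}\frac{d\zeta}{|\zeta|^{n+s}}.
\end{equation*}
To see this, fix $x\in\operatorname{int}(E)$, put $d:=\operatorname{dist}(x,\partial E)$ and choose a nearest boundary point $p\in\partial E$, $|x-p|=d$. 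A supporting hyperplane of $E$ at $p$ bounds an open half-space $H\subseteq E^c$, and the nearest-point property — equivalently, the fact that the ball $B(x,d)\subseteq E$ touches that hyperplane and hence touches it orthogonally — forces $\operatorname{dist}(x,H)=d$. Bounding $\int_{E^c}|x-y|^{-n-s}\,dy$ from below by the same integral over $H$ and rescaling by $d$ gives $\int_{E^c}|x-y|^{-n-s}\,dy\geq \kappa(n,s)\,d^{-s}$; integrating in $x$ over $E$ yields the displayed inequality. A short computation in cylindrical coordinates shows $\kappa(n,s)=s^{-1}\int_{\R^{n-1}}(1+|\eta|^2)^{-(n+s)/2}\,d\eta\in(0,\infty)$ for every $s\in(0,1)$, which is all that is needed since $\widetilde l$ is allowed to depend on $s$.

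Next I would feed in elementary convex geometry. Since $\operatorname{dist}(x,\partial E)\leq r(E)$ for all $x\in E$, where $r(E)$ denotes the inradius, the estimate above gives $P_s(E)\geq \kappa(n,s)\,\omega_n\,r(E)^{-s}$. On the other hand, the inradius of a convex body is small when its diameter is large: if $B(p,r(E))\subseteq E$ and $q\in E$ satisfies $|q-p|\geq \operatorname{diam}(E)/2$ (such a $q$ exists by the triangle inequality applied to a diametral pair of points), then $E$ contains the cone $\operatorname{conv}(D\cup\{q\})$, where $D$ is the $(n-1)$-dimensional disk of radius $r(E)$ centred at $p$ and orthogonal to $q-p$; comparing volumes yields $\omega_n=|E|\geq \frac{\omega_{n-1}}{2n}\,r(E)^{n-1}\operatorname{diam}(E)$, hence $r(E)\leq C_n\,\operatorname{diam}(E)^{-1/(n-1)}$ with $C_n=(2n\,\omega_n/\omega_{n-1})^{1/(n-1)}$. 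Chaining the two bounds,
\begin{equation*}
2\,P_s(B)\ \geq\ P_s(E)\ \geq\ \kappa(n,s)\,\omega_n\,r(E)^{-s}\ \geq\ \kappa(n,s)\,\omega_n\,C_n^{-s}\,\operatorname{diam}(E)^{s/(n-1)},
\end{equation*}
and solving for the diameter gives the claim with $\widetilde l(n,s)=\bigl(2\,P_s(B)\,C_n^{s}/(\kappa(n,s)\,\omega_n)\bigr)^{(n-1)/s}$ (running the same inequality with $q$ at distance $L/2$ for arbitrary $L<\operatorname{diam}(E)$ also rules out a priori the case $\operatorname{diam}(E)=+\infty$).

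The step I expect to demand the most care is the first displayed inequality: one has to make the supporting-hyperplane / nearest-point picture precise — in particular that a ball inscribed in a half-space and meeting its bounding hyperplane touches it orthogonally, so that $H$ lies at distance exactly $\operatorname{dist}(x,\partial E)$ from $x$ — and to check that $\kappa(n,s)$ neither vanishes nor is infinite for $s\in(0,1)$; everything after that is scaling together with the cone volume estimate. Finally, note that convexity is used in exactly two places, namely the half-space lower bound for $P_s$ and the inradius-versus-diameter inequality, which is precisely where the convexity hypothesis of the Proposition enters.
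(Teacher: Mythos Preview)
Your proof is correct and follows a genuinely different route from the paper's. The paper slices $E$ orthogonally to its diameter, bounds $P_s(E)$ from below by $D(E)\int_0^1 P_s^{(n-1)}(E_t)\,dt$ via a change of variables, applies the $(n{-}1)$--dimensional fractional isoperimetric inequality $P_s^{(n-1)}(E_t)\geq c_{n,s}\,\mathcal{H}^{n-1}(E_t)^{(n-s)/n}$ on each slice, and then uses convexity through the cone estimate $\mathcal{H}^{n-1}(E_t)\leq n\omega_n/D(E)$; the outcome is $P_s(E)\gtrsim D(E)^{1+s/n}$. You instead bound the inner integral in $P_s(E)$ pointwise by the integral over a supporting half--space, obtaining $P_s(E)\geq \kappa(n,s)\int_E\operatorname{dist}(x,\partial E)^{-s}\,dx\geq \kappa(n,s)\,\omega_n\,r(E)^{-s}$, and then control the inradius via a cone argument, arriving at $P_s(E)\gtrsim D(E)^{s/(n-1)}$. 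Your approach is more self--contained---it avoids the slicing computation and the appeal to the lower--dimensional fractional isoperimetric inequality, and it yields $\widetilde l(n,s)$ in closed form---at the price of a weaker exponent on $D(E)$, which is immaterial here since any positive power suffices for the proposition.
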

\begin{proof}
Since $E$ is convex and $|E|<+\infty$, for any $x\in E$ there exists $p\in\partial E$ such that $|x-p|=\operatorname{dist}(x,\partial E)=:d$.
{We claim that
\[
\langle x-p, y-p\rangle \ge 0 \qquad\text{for any } y\in E,
\]}
{ as shown in figure \ref{fig:acuteangle}.}
\begin{figure}
    \centering
    \begin{tikzpicture}
        \filldraw(0,-0.05) circle (0.5pt)node[above] {$p$};
        \draw(2.85,-0.7) arc[start angle=60, end angle=120, radius=5];
        \node at (2.9,-0.4) {$\partial E$};
        \filldraw(0,-1.05) circle (0.5pt)node[below] {$x$};
        \filldraw(2,-1.05) circle (0.5pt)node[right] {$y$};
        \draw[dashed](0,-0.05)--(0,-1.05);
        \draw[dashed](0,-0.05)--(2,-1.05);
    \end{tikzpicture}
    \caption{The angle $x\hat{p}y$ is acute for any $y\in E$.}
    \label{fig:acuteangle}
\end{figure}

{
Indeed, assume by contradiction that there exists $y\in E$ such that $\langle x-p,\; y-p\rangle<0$. Now consider the line through $p$ and $y$, i.e. $x_t:=p+t(p-y)$ with $t\in \R$. For $t>0$ sufficiently small, since $\langle x-p,\; y-p\rangle<0$, we have 
\[
|x-x_t|^2=|(x-p)-t(p-y)|^2 =|x-p|^2+2t\,\langle x-p,\; y-p\rangle+t^2|y-p|^2<|x-p|^2=d^2,
\]
which implies that $x_t\in B(x,d)\subset E$. Hence, we have $x_t\in int(E)$. But since $y\in E$ according to \cite[Theorem 6.1]{Rock-book}, we must have $p\in int(E)$ leading to a contradiction.

Thus, in particular, the hyperplane
\[
H:=\{y\in\mathbb{R}^n:\langle y-p,\; x-p\rangle=0\}
\]
is a supporting hyperplane to $E$ at $p$, and its {inward} unit normal vector is $u=\frac{x-p}{|x-p|}$. Consequently, up to a set of measure zero, $$H^-=H^-_{u,p}:=\{y\in\R^n\,|\,\langle y-p\,,\, u\rangle\leq 0\}\subseteq E^c.$$Then, we have 
\begin{equation}\label{perimetro dal basso col piano}
P_s(E)=\int _E\int_{E^c}\frac{dy\,dx}{|x-y|^{n+s}}\geq\int_E\int_{H^-}\frac{dy}{|x-y|^{n+s}}\ dx %=\alpha_{n,s}\int_E\frac{dx}{\operatorname{dist}(x,\partial E)^s}
.
\end{equation}
Now, up to rotations and traslations, we can assume that $u=-e_n$ and $p= -\operatorname{dist}(x, \partial E)e_n$. Then, it holds \[
\int_{H^-}\frac{dy}{|x-y|^{n+s}}=\int_{\{y_n\leq -\operatorname{dist}(x,\partial E)\}}\frac{dy}{|y|^{n+s}}.
\]
By writing $y=(y',y_n)$ with $y'\in \R^{n-1}$ and $y_n\in \R$, we find \begin{align}
\int_{\{y_n\leq -\operatorname{dist}(x,\partial E)\}}\frac{dy}{|y|^{n+s}}&=\int^{-\operatorname{dist}(x,\partial E)}_{-\infty}\int_{\R^{n-1}}\frac{d y'}{(|y'|^2+y_n^2)^{\frac{n+s}{2}}}\ dy_n\\
&=\int^{-\operatorname{dist}(x,\partial E)}_{-\infty}\frac{1}{|y_n|^{n+s}}\int_{\R^{n-1}}\frac{d y'}{(|\frac{y'}{y_n}|^2+1)^{\frac{n+s}{2}}} \ dy_n\\\label{catena integrali}
&=\int_{\operatorname{dist}(x,\partial E)}^{+\infty}\frac{dy_n}{|y_n|^{1+s}}\int_{\R^{n-1}}\frac{d z'}{(|z'|^2+1)^{\frac{n+s}{2}}}\\
&=\frac{\alpha_{n,s}}{\operatorname{dist}(x,\partial E)^s}
\end{align}
for some constant $\alpha_{n,s}>0$, depending only on $n$ and $s$, where in \eqref{catena integrali} we used the change of variable $z'=\frac{y'}{y_n}$. By inserting this in \eqref{perimetro dal basso col piano}, we get \begin{equation}\label{perimetro distanza}
P_s(E)\geq \alpha_{n,s}\int_E\frac{dx}{\operatorname{dist}(x,\partial E)^s}.
\end{equation}
Let $r_E$ be the inradius of $E$. In light of Remark \ref{rmk:bound-conv} we have $r_E<\infty$ and for any $x\in E$, $\operatorname{dist}(x,\partial E)\leq r_E$. Hence by \eqref{perimetro distanza} we infer 
\begin{equation}\label{ps-vol-inr}
P_s(E)\geq \alpha_{n,s}\frac{|E|}{r_E^s}=\frac{\alpha_{n,s}\omega_n}{r_E^s}.
\end{equation}
Moreover, since $\delta_s(E)\leq 1$ we have $P_s(E)\leq 2P_s(B)$ which together with \eqref{ps-vol-inr} ensure
\begin{equation}\label{inradius non troppo piccolo}
r_E\geq \left(\frac{\omega_n\alpha_{n,s}}{2P_s{(B)}}\right)^\frac{1}{s}.
\end{equation}
Now we claim that there exists a constant $c_n>0$, depending only on $n$, such that 
\begin{equation}
\omega_n=|E|\geq c_n r_E^{n-1}\operatorname{diam }(E)\label{diametro inradius},
\end{equation}
Then \eqref{diametro inradius}, together with  \eqref{inradius non troppo piccolo}, gives
\[
\operatorname{diam}(E)\leq\frac{1}{c_n}\left(\frac{\omega_n\alpha_{n,s}}{2P_s{(B)}}\right)^\frac{1-n}{s},
\]
concluding the proof}
{, since the right--hand side depends only on $n$ and $s$.}

{Therefore, we have to prove only \eqref{diametro inradius}. By the definition of inradius, there exists $x_E\in E$ such that $B(x_E,r_E)\subseteq E$. Let $x_0\in \partial E$ and $x_1\in\partial E$ such that $|x_1-x_0|=\operatorname{diam}(E)$. Then, at least one between $x_0$ and $x_1$ has distance from $x_E$ at least $\frac{\operatorname{diam}(E)}{2}$. Indeed, if this is not the case, we would have 
\[
\operatorname{diam}(E)=|x_0-x_1|\leq|x_0-x_E|+|x_1-x_E|<\frac{\operatorname{diam}(E)}{2}+\frac{\operatorname{diam}(E)}{2}=\operatorname{diam}(E)
\]
which gives a contradiction. Thus, up to interchanging the roles of $x_0$ and $x_1$, we can assume $|x_0-x_E|\geq \frac{\operatorname{diam}(E)}{2}.$
By convexity, we also have 
\[
\operatorname{conv}\left(B(x_E,r_E)\cup \{x_0\}\right)\subset \overline E
\]
where $\operatorname{conv}\big(B(x_E,r_E)\cup\{x_0\}\big)$ is the convex hull of $B(x_E,r_E)\cup\{x_0\}$. Hence,
\[
 \frac{\operatorname{diam} (E)\,\omega_{n-1}\,r_E^{n-1}}{2n}\,\leq \,\frac{1}{n}|x-x_E|\,\omega_{n-1}\,r_E^{n-1}\leq\left|\operatorname{conv}\left(B(x_E,r_E)\cup \{x_0\}\right)\right|\leq |E|=\omega_n
\]
which implies \eqref{diametro inradius}
%and this last set contains a cone with vertex in $x_0$, height at least $\frac{\operatorname{diam (E)}}{2}$ and base of $(n-1)$--Hausdorff measure equal to $\omega_{n-1}r_E^{n-1}$. Since the volume of this cone is at least $$ \frac{\operatorname{diam} (E)\omega_{n-1}r_E^{n-1}}{2n}$$we reach \eqref{diametro inradius}.
}{, with $c_n=\frac{\omega_{n-1}}{2n}$.} \end{proof}

\section{Proof of the main Theorem}
In this section, we prove Theorem \ref{thm:main}, following the strategy of Fuglede \cite{Fuglede1993}.\begin{proof}[Proof of Theorem \ref{thm:main}]
Since the quantities $\lambda_0(E)$ and $\delta_{s}(E)$ are scale invariant, we can assume without loss of generality that $|E|=\omega_n$. Up to translation, we may also assume that $\bari(E)=0$. Since $\lambda_0(E)\leq 2$, the inequality \eqref{our-quant-iso} immediately follows  for sets $E$ such that $\delta_{s}(E)\geq1$, by choosing $C\geq 2$.

Hence, from now on, let us consider a convex set $E$ with volume $\omega_n$ and barycenter at the origin,  such that $\delta_{s}(E)< 1$. Since $E$ is convex and bounded (as it has finite measure), we can parametrize its boundary as
\[
\partial E=\left\{(1+u(x))x\:|\:x\in\partial B\:\right\}%\quad\text{where $u\in W^{1,\infty}(\partial B).$}
\]
for some Lipschitz function $u : \partial B\to (0,\infty)$.

Let $d(E)$ be the Hausdorff distance between $E$ and $B$, i.e.

$$ d(E):=\inf\{\,\tau\geq0:\, B_{(1-\tau)_+}\subseteq E\subseteq B_{1+\tau}\,\}$$
where $(1-\tau)_+:=\max\{1-\tau,0\}$.

We will divide the proof into two steps.\par
\medskip

\noindent \emph{Step 1.} First, let us assume that $d(E)\leq a$, for some $a$ to be chosen later. In this case we have
\begin{equation*}%\label{prima stima lambda0}
\begin{split}
    \omega_n\lambda_0(E)&={|E\setminus B|+|B\setminus E|}\\&=\int_{\{u\geq0\}}(\left(1+u(x)\right)^n-1)d\mathcal{H}^{n-1}+\int_{\{u<0\}}(1-\left(1+u(x)\right)^n)d\mathcal{H}^{n-1}\\&=\int_{\partial B}|(1+u(x))^n-1|d\mathcal{H}^{n-1}\\&\leq\int_{\partial B} \sum_{j=1}^n\binom{n}{j}|u(x)|^j d\mathcal{H}^{n-1}\\&\leq\sum_{j=1}^n\binom{n}{j}|a|^{j-1}\int_{\partial B} |u(x)|d \mathcal{H}^{n-1},
\end{split}
\end{equation*}
where in the last inequality we used that $|u|\leq a$ since $d(E)\leq a$. From this last estimate we obtain \begin{equation}
    \label{eqn:baricenter estimate}\lambda_0(E)\leq\frac{(1+a)^n-1}{\omega_na}{\|u\|_{L^1(\partial B)}}.
\end{equation}

Moreover, by \cite[Lemma 2.2]{Fuglede1989} we know that  $\|\nabla u\|_{L^\infty(\partial B)}=O(\sqrt a).$ Consequently, up to choosing $a$ sufficiently small,  we can assume that $E$ is \emph{nearly spherical} with $\| u\|_{W^{1,\infty}(\partial B)}<\varepsilon_0$ where $\varepsilon_0$ is the constant appearing in Theorem \ref{fuglede}.
 Hence, by equation \eqref{fug0}, it follows
 
\begin{equation}\label{usofug}
\begin{split}
P_{s}(E)-P_{s}(B)&\geq c_0\left(\iint_{\partial B \times \partial B} \frac{|u(x)-u(y)|^2}{|x-y|^{n+s}} d \mathcal{H}_x^{n-1} d \mathcal{H}_y^{n-1}+s P_{s}(B)\|u\|_{L^2(\partial B)}^2\right)\\&\geq{c_0sP_{s}(B)}\|u\|^2_{L^2(\partial B)}\\&\geq\frac{c_0sP_{s}(B)}{n\omega_n}\|u\|^2_{L^1(\partial B)};\end{split}
\end{equation}
where $c_0$ depends only on $n$, and in the last inequality we used H\"older's inequality. Combining \eqref{usofug} with \eqref{eqn:baricenter estimate}, we find \begin{equation}\label{eqn:funzione}
\delta_{s}(E)\geq \frac{\omega_n a^2c_0s}{n((1+a)^n-1)^2}\lambda_0(E)^2,
\end{equation}
for some universal constant $a>0$ sufficiently small.\par
\medskip
\noindent \emph{Step 2.} Since $\delta_{s}(E)<1$, by Proposition \ref{prop:deficit_piccolo_implica_cubo}, there exists $l=l(n,s)$ such that $E\subseteq Q_l$. Moreover, by \cite[Pages 45--46]{Fuglede1993}, we know that  $d(E)=O(\lambda(E)^{\frac{2}{n+1}})$. Therefore, thanks to Lemma \ref{lm:decifit picclo baricentro piccolo}, we can choose $\eta_{s}=\eta(l,s,n,a)>0$ such that, if $\delta_{s}(E)\leq\eta_{s}$, then $d(E)\leq a$ where $a$ is the constant from \emph{Step 1}. Thus, by \emph{Step 1}, we infer that  \begin{equation}\label{deficit piccolo}
\delta_{s}(E)\geq\frac{ \omega_n a^2 c_0s}{n\left((1+a)^n-1\right)^2}   \lambda_0(E)^2\qquad\text{{if\quad $\delta_{s}(E)\leq \eta_s$.}}
\end{equation}{On the other hand, if $\delta_{s}(E)\ge\eta_s$ we have, recalling that $\lambda_0(E)\le2$, \begin{equation}\label{deficit grande}
\delta_{s}(E)\geq\frac{\eta}{4}\lambda_0(E)^2\qquad\text{if\quad $\delta_{s}(E)\geq\eta_s.$}\end{equation}}
which concludes the proof by setting \begin{equation}\label{costante dipende da s}{ C=\max   \left\{\frac{n^{1/2}((1+a)^n-1)}{\omega_n^{1/2}ac_0^{1/2}s^{1/2}},\frac{2}{\sqrt{\eta_{s}}}\right\}.}\end{equation}
\end{proof}
\begin{remark}\label{rmk:ns}
 It is clear that the proof of Theorem \ref{thm:main} works, not only for convex bodies, but also for nearly spherical sets $E$, as in Definition \ref{def:nearlyspherical}, such that  $\|u\|_{W^{1,\infty}(\partial B)}<\varepsilon_0$, where $\varepsilon_0\in(0,\frac{1}{2})$ is  the constant {of Theorem} \ref{fuglede}. Indeed, by retracing \emph{Step 1} of the proof of Theorem \ref{thm:main} and recalling that $d(E)<\varepsilon_0$, we have
  \begin{equation}\label{ineq nearly spherical}
  \delta_{s}(E)\geq \frac{\omega_n\varepsilon_0^2c_0 s}{n\left(\left(1+\varepsilon_0\right)^n-1\right)^2}\lambda_0(E)^2,%\ge\frac{c_0}{4^{n+1}n^2}s\lambda_0(E)^2,
   \end{equation}for any $s\in(0,1).$
\end{remark}
\begin{remark} Let us fix $s\in(0,1)$ and $t\in(s,1).$ By \cite[Theorem 1.1]{DNRV} there exists a constant $D:=D(n,s,t)$, which is bounded as $t\nearrow 1,$ such that \begin{equation}\label{da s_0 a s}
\delta_t(E)\geq D\: \delta_{s}(E).
\end{equation}Combining this estimate with the result of Theorem \ref{thm:main} we find\begin{equation}\label{da s_0 a s bis}
 \lambda_0(E)\leq \frac{ C}{\sqrt D}\sqrt{\delta _t(E)},
\end{equation}for any $t\in(s,1).$
    Moreover, taking into account \eqref{convergenza}, we have that $\delta_t(E)\to\delta(E)$ as $t\nearrow1$. Hence, taking the limit as $t\nearrow 1$ in \eqref{da s_0 a s bis}, we recover
\begin{equation}\label{local}
 \lambda_0(E)\leq \frac{\gamma(n,s)}{\sqrt{D^*(n,s)}}\sqrt{\delta(E)}.
 \end{equation}
for any convex set $E$ with finite measure and any $s\in(0,1)$. Here $D^*(n,s)$ is defined as $$D^*(n,s)=\limsup_{t\nearrow 1}D(n,s,t)$$ and the constant ${\gamma(n,s)}$ is given by \[
 {\gamma(n,s)}=\max   \left\{\frac{n^{1/2}((1+a)^n-1)}{\omega_n^{1/2}ac_0^{1/2}s^{1/2}},\frac{2}{\sqrt{\eta_{s}}}\right\}
, \] 
 with $a>0$ universal (depending only on $n$) chosen as in \emph{Step 1} of the proof of Theorem \ref{thm:main} and $\eta_{s}$ defined as in Lemma \ref{lm:decifit picclo baricentro piccolo}.
  Note that \eqref{local} is consistent with the results obtained in \cite{Fuglede1993}, \cite{BCH} and \cite{GP}.
\end{remark}
\begin{ack}
 The first author wishes to thank the Department of Mathematics of the University of Bologna for organizing the ASK Conference in December 2024, during which this work was initiated. 
   The authors are members of the Gruppo Nazionale per l’Analisi Matematica, la Probabilità e le loro
   Applicazioni (GNAMPA) of the Istituto Nazionale di Alta Matematica (INdAM). E.M.M., B.R. and M.T. were partially funded by the INdAM--GNAMPA Project "Ottimizzazione Spettrale, Geometrica e Funzionale", CUP: E5324001950001, and by the PRIN project 2022R537CS "NO$^3$--Nodal Optimization, NOnlinear elliptic equations, NOnlocal geometric problems, with a focus on regularity", CUP: J53D23003850006.
\end{ack}

\subsection*{Declarations}
The authors declare that there is no conflict of interest.
\bibliographystyle{abbrv}
\bibliography{biblio}
\end{document}